\def\thmstyle{\it} 
\def\@begintheorem#1#2{\it \trivlist \item[\hskip
        \labelsep{\bf #1\ #2.}]\thmstyle}
\def\@opargbegintheorem#1#2#3{\it \trivlist \item[\hskip
        \labelsep{\bf #1\ #2\ (#3).}]\thmstyle}
\newtheorem{theorem}{{\indent}Theorem} 
\definecolor{dgreen}{RGB}{0,150,0}
\begin{document}

\title{Complexity of the variable-free fragments\protect\\ 
of non-normal modal logics\protect\\
(extended version)\footnote{The research is supported by the MSHE ``Priority 2030'' strategic academic leadership program. The paper is an extended version of the abstracts submitted to the conference Smirnov Readings~\cite{KudinovRybakov:2025:SR}.}\\~}

\author[1]{A.\,Kudinov}
\author[2]{M.\,Rybakov}
\affil[1]{HSM MIPT, HSE University}
\affil[2]{HSM MIPT, HSE University, Tver State University}

\date{}

\newcommand{\logic}[1]{\mathbf{#1}}
\newcommand{\num}[1]{\mathds{#1}}
\newcommand{\numbers}         [1]       {\mathds{#1}}
\newcommand{\numN}                      {\numbers{N}}
\newcommand{\numNp}                     {\numbers{N}^+}
\newcommand{\numQ}                      {\numbers{Q}}
\newcommand{\numR}                      {\numbers{R}}
\newcommand{\numC}                      {\numbers{C}}
\newcommand{\numA}                      {\numbers{A}}
\newcommand{\numAbar}                   {\bar{\numbers{A}}}
\newcommand{\ccls}[1]{\mathrm{#1}}
\newcommand{\HideText}[1]{}

\maketitle


\begin{abstract}
We show that the satisfiability problem for the variable-free fragment of every modal logic containing classical propositional logic and contained in the weak Grzegorczyk logic is $\ccls{NP}$-hard. In particular, the variable-free fragments of the non-normal modal logics $\logic{E}$, $\logic{EM}$, $\logic{EN}$, and $\logic{EMN}$ are $\ccls{coNP}$-complete.
\end{abstract}

Modal logic is widely used in philosophy to analyze and formalize various categories related to possibility, necessity, knowledge, obligations, time, and other concepts. When interpreting knowledge, logics between $\logic{S4}$ and $\logic{S5}$ are often used, but sometimes weaker systems are considered, down to the minimal normal modal logic $\logic{K}$. For interpreting ``belief'', logics without the axiom $\Box p \to p$ are frequently employed. All these modal logics are normal, and they imply an agent's ability to perform arbitrarily long and complex logical deductions. Indeed, normality, i.e., the presence of the formula $\Box (p\to q) \to (\Box p \to \Box q)$, essentially means the agent's knowledge is closed under modus ponens. 
When combined with the necessitation rule and the classical tautologies, this implies that the agents know (or believe in) all logical consequences of their knowledge.
The study of knowledge and belief for agents without ``logical omniscience'' necessarily leads us to non-normal modal logics. A good overview of the use of non-normal modal logics in philosophy can be found in~\cite{Pacuit:2017}.

A crucial property of a formal logical system is its computational complexity. Low complexity enables efficient use of the system in various practical applications analyzing agents' beliefs and/or knowledge. Studying fragments with a finite set of variables provides better insight into how this complexity is structured and the potential feasibility (or impossibility) of creating more efficient algorithms.

Many modal and superintuitionistic propositional logics are $\ccls{PSPACE}$-hard~\cite{Ladner77,Statman1979,Chagrov:1985}, and often $\ccls{PSPACE}$-hardness can be proven for their two-vari\-able fragments, one-variable fragments or even variable-free fragments~\cite{Spaan:1993:PhD,Halpern:1995,Svejdar:2003,ChR:2003:AiML,MR:2002,MR:2003,MR:2004,MR:2006,MR:2007}. However, when adding axioms that limit the height or width of Kripke frames, we obtain ``strong'' logics with $\ccls{NP}$-complete satisfiability problems. For example, the logics $\logic{K5}$, $\logic{KD45}$, $\logic{S5}$, $\logic{K4.3}$, $\logic{S4.3}$, $\logic{GL.3}$, $\logic{Grz.3}$, and many others are $\ccls{coNP}$-complete (see, e.g.,~\cite[Chapter~18]{ChZ}). A~similar situation arises when the logic is ``weak''. As an example, consider the non-normal modal logic $\logic{E}$, obtained by closing the classical logic $\logic{Cl}$ under substitution and the rule
$$
\frac{A\leftrightarrow B}{\Box A\leftrightarrow \Box B}.
$$
The logic $\logic{E}$ is contained in the minimal normal modal logic~$\logic{K}$, yet its satisfiability problem is the same as for $\logic{Cl}$, i.e., $\ccls{NP}$-complete~\cite{Vardi:1989}. 
The following axioms are often added to~$\logic{E}$:
$$
\begin{array}{lcl}
    \bm{am} & = & \Box (p \land q) \to \Box p;\\
    \bm{ac} & = & \Box p \land \Box q \to \Box (p \land q);\\
    \bm{an} & = & \Box \top.
\end{array}
$$
The nomenclature of non-normal logics is typically constructed by appending to the logic's name the second letters of the added axioms. For example, $\logic{EMN} = \logic{E} + \bm{am} + \bm{an}$. Logics containing the axiom $\bm{am}$ are called monotonic, and the logic $\logic{EMCN}$ coincides with the minimal normal modal logic $\logic{K}$ (see \cite[Section 2.3]{Pacuit:2017}).

It is known that the logics $\logic{K5}$, $\logic{KD45}$, and $\logic{S5}$, while $\ccls{coNP}$-complete in a language with countably many variables, are polynomially decidable in any language with a fixed finite number of variables, which follows from their local tabularity; the same applies to other locally tabular logics or their locally tabular fragments (e.g., the disjunction-free fragment of intuitionistic logic~\cite{Diego}). At the same time, the logics $\logic{S4.3}$ and $\logic{Grz.3}$ remain $\ccls{coNP}$-hard in a two-variable language, while $\logic{K4.3}$ and $\logic{GL.3}$ are hard even in a one-variable language~\cite{ChR:2003:AiML}.

A natural question arises: what happens to the complexity of ``weak'' logics when their language is restricted to a finite set of variables? In particular, what is the complexity of such fragments for the aforementioned logics?

The work \cite{Vardi:1989} shows that not only $\logic{E}$ but also $\logic{EM}$, $\logic{EN}$, and $\logic{EMN}$ are $\ccls{coNP}$-complete in a language with countably many variables.
Note that $\logic{E}$, $\logic{EM}$, $\logic{EN}$, and $\logic{EMN}$ are not locally tabular: the formulas of the set $\{\Box^k\bot:k\in\numN\}$ are pairwise non-equivalent in each of them (it suffices to observe that they are all contained in~$\logic{K}$, where these formulas are pairwise non-equivalent). Using methods from~\cite{ChR:2003:AiML} and generalizing results from~\cite{ChR:2003:AiML}, the following theorems can be easily proved.

\begin{theorem}
Let $L$ be a modal logic containing $\logic{Cl}$ and contained in $\logic{Grz.3}$. Then the $L$-satisfiability problem is $\ccls{NP}$-hard for the two-variable fragment of $L$.
\end{theorem}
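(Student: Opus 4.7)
I would reduce classical propositional SAT to the two-variable $L$-satisfiability problem, following the scheme of~\cite{ChR:2003:AiML}. Given a classical formula $\psi(x_1,\ldots,x_n)$, I would construct in polynomial time a two-variable modal formula
\[
\varphi(p,q)\;=\;\mathrm{Chain}_n(p)\;\land\;\psi[\xi_1/x_1,\ldots,\xi_n/x_n],
\]
where each $\xi_i(p,q)$ is a distinct two-variable modal formula intended to ``read off'' the value of $q$ at the $i$-th node of an alternating chain fixed by the scaffolding formula $\mathrm{Chain}_n(p)$.

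The key structural observation is the sandwich $\logic{Cl}\subseteq L\subseteq\logic{Grz.3}$: the first inclusion makes every classical propositional tautology an $L$-theorem, so every $L$-satisfiable formula is classically satisfiable when its maximal modal subformulas are treated as fresh propositional atoms; the second inclusion makes every $\logic{Grz.3}$-satisfiable formula also $L$-satisfiable. It therefore suffices to verify
\begin{enumerate}
    \item[(i)] if $\psi\in\mathrm{SAT}$, then $\varphi$ has a $\logic{Grz.3}$-model; and
    \item[(ii)] if $\psi\notin\mathrm{SAT}$, then $\varphi$ is propositionally unsatisfiable (with modal subformulas treated as atoms).
\end{enumerate}
Statement (ii) comes for free once the $\xi_i$'s are pairwise distinct modal subformulas: the propositional skeleton of $\psi[\xi_i/x_i]$ is then just $\psi$ with its variables renamed to fresh atoms, hence propositionally unsatisfiable whenever $\psi$ is, and conjoining $\mathrm{Chain}_n(p)$ cannot repair this. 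For statement (i), given a classical assignment $a$ satisfying $\psi$, I would build a finite reflexive antisymmetric chain $w_0<w_1<\cdots<w_{2n}$ (a $\logic{Grz.3}$-frame), set $p$ to alternate truth values along the chain, and set $q(w_{2i})=a(x_i)$. Each $\xi_i$ would then be a nested formula of the shape $\Diamond(p\land\Diamond(\neg p\land\Diamond(p\land\cdots\land q)))$ with roughly $2i$ nested modalities, so that its truth at $w_0$ matches the value of $q$ at $w_{2i}$, i.e.\ $a(x_i)$, while $\mathrm{Chain}_n(p)$ would be a one-variable formula of size polynomial in $n$ forcing the required alternation length.

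The main obstacle is the careful design of $\mathrm{Chain}_n(p)$ and of the formulas $\xi_i(p,q)$ in only two variables so that they can be simultaneously and independently set to any prescribed pattern of truth values in the constructed $\logic{Grz.3}$-model, while remaining pairwise distinct modal subformulas of polynomial total size. Here the Grz-style axioms together with $.3$-linearity are essential: they rule out stuttering at reflexive points and yield a well-defined ``next alternation of $p$'', which is precisely what makes the nested-$\Diamond$ coding of the $\xi_i$'s well-behaved. This is exactly the technical content that generalizes from the $\logic{S4.3}$/$\logic{Grz.3}$ cases treated in~\cite{ChR:2003:AiML}; the sandwich argument above then delivers $\ccls{NP}$-hardness uniformly for every logic in the interval $[\logic{Cl},\logic{Grz.3}]$.
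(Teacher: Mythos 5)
Your overall architecture is exactly the paper's: substitute pairwise distinct modal formulas for the propositional variables, get one direction for free from $\logic{Cl}\subseteq L$ plus closure under substitution (your point (ii)), and get the other direction by exhibiting a model over a linear reflexive Noetherian chain, which is a $\logic{Grz.3}$-frame, so that $L\subseteq\logic{Grz.3}$ finishes the job (your point (i)). That sandwich argument is sound, and your observation that (ii) reduces to the unsatisfiability of the propositional skeleton is correct. Two remarks on the scaffolding: the formula $\mathrm{Chain}_n(p)$ is unnecessary --- since the only direction that needs a model is the one where you get to \emph{build} the model, nothing has to be ``forced'' (and for a logic as weak as $\logic{E}$ nothing could be forced anyway); the paper's reduction is a bare substitution instance of $\psi$.

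The genuine gap is in the formulas $\xi_i$, and it is precisely the obstacle you flag but do not resolve. A purely positive nested formula $\Diamond(p\land\Diamond(\neg p\land\cdots\land q))$ evaluated at the root of a reflexive linear chain does not read off the value of $q$ at the specific node $w_{2i}$: in a reflexive linear order the root sees every node, so the formula asserts only the existence of \emph{some} non-strictly increasing alternating path of the required length ending in a $q$-node. Consequently $\xi_i$ is true at $w_0$ iff $q$ holds at some admissible node of depth at least $2i$, which makes the tuple $(\xi_1,\dots,\xi_n)$ monotone: $\xi_{i+1}$ true forces $\xi_i$ true. You therefore cannot realize an arbitrary satisfying assignment $a$ by setting $q(w_{2i})=a(x_i)$, and step (i) fails for non-monotone assignments. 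The missing device is an \emph{exact}-depth formula: anchor the bottom of the chain with something like $\delta_0=\Box q$ (true only at the endpoint), let $\delta_{n+1}=q\land\Diamond(\neg q\land\Diamond\delta_n)$ count alternations down to that anchor, and use $\alpha_n=\delta_n\land\neg\delta_{n+1}$, which holds at exactly one node of the chain; the substituted formula is then $\Diamond(p\land\alpha_n)$ with $p$ as the payload variable. The conjunct $\neg\delta_{n+1}$ is what kills the monotonicity and makes the $n$ readouts independent; without it (or an equivalent exactness trick) the reduction does not work.
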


\begin{proof}
Let $p$ and $q$ be propositional variables. Define the following formulas recursively:
$$
\begin{array}{lcl}
\delta_0 & = & \Box q; \\
\delta_{n+1} & = & q\wedge \Diamond (\neg q \wedge \Diamond \delta_n). \\
\end{array}
$$
For each $n\in\numN$, let 
$$
\begin{array}{lcl}
\alpha_n & = & \delta_n\wedge \neg\delta_{n+1}; \\
\beta_n & = & \Diamond (p \wedge \alpha_n). \\
\end{array}
$$
Let $\mathcal{P}=\{p_k : k\in\numN\}$ be a countable set of propositional variables in the language (not necessarily containing all of them). Let $\bm{s}$ be the substitution defined for each variable $r$ as follows:
$$
\begin{array}{lcl}
\bm{s}r & = & 
 \left\{
 \begin{array}{rl}
 \beta_k, & \mbox{if $r=p_k$;} \\
       r, & \mbox{if $r\not\in\mathcal{P}$.} 
 \end{array}
 \right.
\end{array}
$$
It is easy to see that for every classical formula $\varphi$ with variables $p_1,\ldots,p_n$, the following equivalence holds:
$$
\begin{array}{lcl}
\varphi\in\logic{Cl} & \iff & \bm{s}\varphi \in L. \\
\end{array}
\eqno{\mbox{$({\ast})$}}
$$
The implication $({\Rightarrow})$ in the equivalence~\mbox{$({\ast})$} holds because $\logic{Cl}\subseteq L$ and the logic $L$ is closed under substitution.
To justify the implication $({\Leftarrow})$ in the equivalence~\mbox{$({\ast})$}, suppose that $\varphi\not\in\logic{Cl}$. 
Since $\varphi\not\in\logic{Cl}$, there exists a valuation $v\colon\mathcal{P}\to\{0,1\}$ that refutes the formula $\varphi$. Consider the Kripke frame $F = \langle \omega \cup \{\omega\}, \geqslant\rangle$ (i.e., $F$ is $1+\omega^\ast$ with the non-strict order\footnote{In fact, we may not differ $\omega$ from~$\numN$.}), and take a valuation such that for each $k\in\omega$, the following conditions hold:
$$
\begin{array}{lcl}
k\models q & \iff & \mbox{$k$ is even;} \\
k\models p & \iff & \mbox{$k$ is even and $v(p_k)=1$.} \\
\end{array}
$$
Then it is easy to see that for each $k\in\numN$,
$$
\begin{array}{lcl}
\omega\models \beta_k & \iff & v(p_k) = 1, \\
\end{array}
$$
which implies that $\omega\not\models\bm{s}\varphi$. It remains to note that $F\models\logic{Grz.3}$, so $\bm{s}\varphi\not\in\logic{Grz.3}$, and hence $\bm{s}\varphi\not\in L$, since $L\subseteq\logic{Grz.3}$.
\end{proof}

\begin{theorem}
Let $L$ be a modal logic containing $\logic{Cl}$ and contained in $\logic{GL.3}$. Then the $L$-satisfiability problem is $\ccls{NP}$-hard for the one-variable fragment of $L$.
\end{theorem}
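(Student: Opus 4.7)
The strategy is to imitate the previous proof while replacing the auxiliary variable $q$ by variable-free ``depth counters''. In a transitive frame, $\Box^{n+1}\bot$ expresses ``depth at most~$n$'', so in a $\logic{GL.3}$-frame (a strict linear order that is transitive and converse well-founded) the variable-free formula $\gamma_n = \Box^{n+1}\bot\wedge\neg\Box^n\bot$ pinpoints the unique accessible point of depth exactly~$n$, eliminating the need for~$q$.

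With $p$ the single propositional variable, I set $\beta_k = \Diamond(\gamma_k\wedge p)$ and define the substitution $\bm{s}$ exactly as in the previous theorem, but with these new $\beta_k$. The plan is to establish the same equivalence $\varphi\in\logic{Cl}\iff\bm{s}\varphi\in L$. The direction $({\Rightarrow})$ is immediate from $\logic{Cl}\subseteq L$ and the closure of $L$ under substitution. For $({\Leftarrow})$, given a classical countermodel $v$ for $\varphi$, I would take the finite frame $F=\langle\{0,1,\ldots,M\},{>}\rangle$ with $M$ strictly larger than every index $k$ of $p_k$ appearing in~$\varphi$. Being a finite strict linear order, $F$ validates $\logic{GL.3}\supseteq L$. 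Set $k\models p$ iff $v(p_k)=1$. Transitivity and linearity of $F$ imply that $\gamma_k$ holds only at the point~$k$, so the root $M$ satisfies $\beta_k$ iff $v(p_k)=1$; a routine induction on~$\varphi$ then gives $M\not\models\bm{s}\varphi$ and hence $\bm{s}\varphi\not\in L$.

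The only point requiring real verification is that $\gamma_n$ isolates a unique accessible point of depth exactly~$n$: this uses both transitivity (to translate $\Box^{n+1}\bot$ into the depth bound) and linearity (to guarantee uniqueness at each depth), both available in~$F$. Since $|\beta_k|=O(k)$, one has $|\bm{s}\varphi|=O(|\varphi|^2)$, so $\varphi\mapsto\bm{s}\varphi$ is a polynomial reduction from $\ccls{SAT}$ to one-variable $L$-satisfiability, yielding $\ccls{NP}$-hardness.
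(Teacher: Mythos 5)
Your proposal is correct and follows essentially the same route as the paper: the same substitution-based reduction from classical (non-)validity, with your depth formulas $\gamma_n=\Box^{n+1}\bot\wedge\neg\Box^n\bot$ playing exactly the role of the paper's $\alpha'_n=\Diamond^n\Box\bot\wedge\neg\Diamond^{n+1}\Box\bot$ (they are interchangeable on the frames used), and a finite strict linear order $\{0,\dots,M\}$ in place of the paper's $\langle\omega\cup\{\omega\},>\rangle$ --- both of which validate $\logic{GL.3}$. No gaps.
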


\begin{proof}
Let $p$ be a propositional variable. Define the following formulas recursively:
$$
\begin{array}{lcl}
\delta'_0 & = & \Box \bot; \\
\delta'_{n+1} & = & \Diamond \delta_n. \\
\end{array}
$$
For each $n\in\numN$, let 
$$
\begin{array}{lcl}
\alpha'_n & = & \delta'_n\wedge \neg\delta'_{n+1}; \\
\beta'_n & = & \Diamond (p \wedge \alpha'_n). \\
\end{array}
$$
Let $\mathcal{P}=\{p_k : k\in\numN\}$ be a set of propositional variables, and let $\bm{s}'$ be the substitution defined for each variable $r$ as follows:
$$
\begin{array}{lcl}
\bm{s}'r & = & 
 \left\{
 \begin{array}{rl}
 \beta'_k, & \mbox{if $r=p_k$;} \\
       r, & \mbox{if $r\not\in\mathcal{P}$.} 
 \end{array}
 \right.
\end{array}
$$
Then, for every classical formula $\varphi$ with variables $p_1,\ldots,p_n$, the following equivalence holds:
$$
\begin{array}{lcl}
\varphi\in\logic{Cl} & \iff & \bm{s}'\varphi \in L. \\
\end{array}
\eqno{\mbox{$({\ast}{\ast})$}}
$$
The implication $({\Rightarrow})$ in the equivalence~\mbox{$({\ast}{\ast})$} holds because $\logic{Cl}\subseteq L$ and the logic $L$ is closed under substitution.
To justify the implication $({\Leftarrow})$ in the equivalence~\mbox{$({\ast}{\ast})$}, suppose that $\varphi\not\in\logic{Cl}$. 
Let $v\colon\mathcal{P}\to\{0,1\}$ be a valuation that refutes the formula $\varphi$. Consider the Kripke frame $F' = \langle \omega \cup \{\omega\}, >\rangle$ (i.e., $F'$ is $1+\omega^\ast$ with the strict order), and take a valuation such that for each $k\in\omega$, the following condition holds:
$$
\begin{array}{lcl}
k\models p & \iff & \mbox{$v(p_k)=1$.} \\
\end{array}
$$
Then it is easy to see that for each $k\in\numN$,
$$
\begin{array}{lcl}
\omega\models \beta'_k & \iff & v(p_k) = 1, \\
\end{array}
$$
which implies that $\omega\not\models\bm{s}'\varphi$. It remains to note that $F'\models\logic{GL.3}$, so $\bm{s}'\varphi\not\in\logic{GL.3}$, and hence $\bm{s}'\varphi\not\in L$, since $L\subseteq\logic{GL.3}$.
\end{proof}

\begin{theorem}
Let $L$ be a modal logic containing $\logic{Cl}$ and contained in $\logic{wGrz}$. Then the $L$-satisfiability problem is $\ccls{NP}$-hard for the variable-free fragment of $L$.
\end{theorem}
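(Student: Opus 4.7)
I plan to mirror the proofs of Theorems~1 and~2: define a substitution $\bm{s}''$ from propositional variables to \emph{variable-free} modal formulas, and show
$$
\varphi\in\logic{Cl} \iff \bm{s}''\varphi\in L.
$$
The $(\Rightarrow)$ direction is automatic from $\logic{Cl}\subseteq L$ and closure of $L$ under substitution. The novelty is that $\bm{s}''p_k$ must be variable-free, so individual worlds of a refuting model cannot be marked by a propositional variable (as $p$ was used in Theorem~2) and must instead be distinguished by the \emph{shape} of the subframe attached to them.

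To this end, I would define inductively
$$
\delta''_0 = \Box\bot,\qquad \delta''_{n+1} = \Diamond\,\delta''_n,
$$
and let $\alpha''_n = \delta''_n\wedge\neg\delta''_{n+1}$, $\beta''_n = \Diamond\,\alpha''_n$, $\bm{s}''p_k=\beta''_k$, and $\bm{s}''r=r$ for $r\notin\mathcal{P}$. For the $(\Leftarrow)$ direction, given $\varphi\notin\logic{Cl}$ and a refuting valuation $v\colon\mathcal{P}\to\{0,1\}$, I would build the (irreflexive, Noetherian) Kripke frame $F$ whose universe is a root $\omega$ together with, for each $k$ with $v(p_k)=1$, a disjoint $R$-chain $L^k_0\mathrel{R}L^k_1\mathrel{R}\cdots\mathrel{R}L^k_k$ in which $L^k_k$ is a leaf and $L^k_0$ is the only node of the chain directly $R$-reachable from $\omega$. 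Because the chains are mutually disjoint (and no transitive short-cuts are added), from $\omega$ only the $L^k_0$'s are immediately $R$-accessible, and a routine induction on $n$ shows that $L^k_0\models\alpha''_n$ iff $n=k$. Hence $\omega\models\beta''_k$ iff $v(p_k)=1$, and a standard induction on the structure of $\varphi$ then gives $\omega\not\models\bm{s}''\varphi$.

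The hard part is verifying that $F\models\logic{wGrz}$: the whole argument depends on $\logic{wGrz}$ being sound with respect to a class of frames rich enough to accommodate this finite irreflexive tree with arbitrarily many disjoint branches rooted at $\omega$. This is the essential point in which the extra freedom offered by $\logic{wGrz}$ beyond $\logic{GL.3}$ (used in Theorem~2) is used: in a transitive linear frame $\omega$ would see the interior nodes of all branches, and $\beta''_k$'s truth at $\omega$ would degenerate to depending on the maximum of $\{k' : v(p_{k'}) = 1\}$ instead of on the individual bit $v(p_k)$, destroying the independence of the encoding. Once $F\models\logic{wGrz}$ is established, $\bm{s}''\varphi\notin\logic{wGrz}$ follows, and since $L\subseteq\logic{wGrz}$ we conclude $\bm{s}''\varphi\notin L$.
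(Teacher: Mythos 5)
Your overall strategy is the paper's: substitute variable-free ``detector'' formulas $\beta''_k$ for the variables and refute $\bm{s}''\varphi$ on a frame for $\logic{wGrz}$. But the step you yourself flag as ``the hard part'' --- $F\models\logic{wGrz}$ --- is not merely unverified; it is false for the frame you build, and this is exactly where your construction diverges from (and falls short of) the paper's. With the standard axiomatization $\logic{wGrz}=\logic{K}+\Box(\Box(p\to\Box p)\to p)\to\Box p$, consider the generated subframe of your $F$ rooted at $L^3_0$, i.e.\ the intransitive irreflexive chain $L^3_0\,R\,L^3_1\,R\,L^3_2\,R\,L^3_3$, and put $p$ true only at $L^3_2$. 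Then $\Box(p\to\Box p)$ fails at $L^3_1$ (its successor $L^3_2$ satisfies $p\wedge\neg\Box p$), so $\Box(p\to\Box p)\to p$ holds at $L^3_1$ vacuously, so the antecedent $\Box(\Box(p\to\Box p)\to p)$ holds at $L^3_0$, while the consequent $\Box p$ fails there. Hence any $\varphi$ containing a variable $p_k$ with $k\geqslant 3$ and $v(p_k)=1$ yields a frame refuting the Grzegorczyk axiom, and the final step ``$F\models\logic{wGrz}$, hence $\bm{s}''\varphi\notin L$'' collapses. (Relatedly, $\logic{wGrz}$ proves $\Box p\to\Box\Box p$, so all its Kripke frames are transitive; your intransitive tree cannot validate it.)

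You are then caught in a dilemma that your proposal does not resolve. Restoring transitivity makes the frame a finite $\logic{GL}$-frame and hence a $\logic{wGrz}$-frame, but then the degeneration you attribute to \emph{linear} frames already occurs for trees: the root sees every interior node of every branch, $\alpha''_n$ holds at the point of height $n$ of \emph{each} sufficiently long branch, and $\omega\models\beta''_k$ reduces to $\max\{k':v(p_{k'})=1\}\geqslant k$, destroying the bit-by-bit encoding. This is precisely why the paper's $\alpha''_n$ carries the extra conjunct $\Diamond\Box^+\Diamond\top$ and why its frame has the auxiliary world $w^\ast$: $w^\ast$ is arranged to be the only world all of whose successors have successors, and within each gadget only its top point $w^k_0$ sees $w^\ast$, so the conjunct $\Diamond\Box^+\Diamond\top$ isolates the tops of the gadgets and $\alpha''_n$ becomes true at $w^n_0$ alone, even though the root sees many worlds. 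Your proposal omits both ingredients, and without them (or some substitute) the argument cannot be completed.
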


\begin{proof}
For each $n\in\numNp$, let 
$$
\begin{array}{lcl}
\alpha''_n & = & \Diamond \Box^+\Diamond\top \wedge \delta'_n\wedge \neg\delta'_{n+1}; \\
\beta''_n & = & \Diamond (p \wedge \alpha'_n). \\
\end{array}
$$
Let $\mathcal{P}=\{p_k : k\in\numNp\}$ be set of propositional variables, and let $\bm{s}''$ be the substitution defined for each variable $r$ as follows:
$$
\begin{array}{lcl}
\bm{s}''r & = & 
 \left\{
 \begin{array}{rl}
 \beta''_k, & \mbox{if $r=p_k$;} \\
       r, & \mbox{if $r\not\in\mathcal{P}$.} 
 \end{array}
 \right.
\end{array}
$$
Then, for every classical formula $\varphi$ with variables $p_1,\ldots,p_n$, the following equivalence holds:
$$
\begin{array}{lcl}
\varphi\in\logic{Cl} & \iff & \bm{s}''\varphi \in L. \\
\end{array}
\eqno{\mbox{$({\ast}{\ast}{\ast})$}}
$$
The implication $({\Rightarrow})$ in the equivalence~\mbox{$({\ast}{\ast}{\ast})$} holds because $\logic{Cl}\subseteq L$ and $L$ is closed under substitution.
To justify the implication $({\Leftarrow})$ in the equivalence~\mbox{$({\ast}{\ast}{\ast})$}, suppose that $\varphi\not\in\logic{Cl}$. 
Let $v\colon\mathcal{P}\to\{0,1\}$ be a valuation that refutes the formula $\varphi$. 
Consider the Kripke frame $F'' = \langle W, R\rangle$, where
$$
\begin{array}{lcl}
W & = & \{w, w^\ast\}\cup \{w^k_m : \mbox{$k\in \numNp$, $0\leqslant m\leqslant k$}\}; \\
R & = & \{(w, x) : x\in W\setminus\{w\}\}\cup {}\\
  &   & \{(w^k_m, w^k_{m+1}) : \mbox{$k\in \numNp$, $0\leqslant m\leqslant k-1$}\} \cup {}\\
  &   & \{(w^k_0, w^\ast) : k\in \numNp\}\cup {} \\
  &   & \{(w, w_0^k) : v(p_k) = 1\}. \\
\end{array}
$$
Then it is easy to see that for each $k\in\numN$,
$$
\begin{array}{lcl}
w\models \beta''_k & \iff & v(p_k) = 1, \\
\end{array}
$$
which implies that $w\not\models\bm{s}''\varphi$. It remains to note that $F''\models\logic{wGrz}$, so $\bm{s}''\varphi\not\in\logic{wGrz}$, and hence $\bm{s}''\varphi\not\in L$, since $L\subseteq\logic{wGrz}$.
\end{proof}

Returning to $\logic{E}$, $\logic{EM}$, $\logic{EN}$, and $\logic{EMN}$, it is easy to see that they satisfy the conditions of all three theorems, so we conclude that even their variable-free fragments are $\ccls{coNP}$-complete. 

This result is interesting because we were unaware of any logic with a $\ccls{coNP}$-complete variable-free fragment. For instance, the variable-free fragments of logics between $\logic{K}$ and $\logic{wGrz}$ are $\ccls{PSPACE}$-hard~\cite{ChR:2003:AiML,AgadzhanianRybakov:2022:arXiv}, while the variable-free fragments of logics between $\logic{GL}$ and $\logic{GL.3}$, though containing infinitely many pairwise inequivalent formulas, are polynomially decidable (see, e.g.,~\cite{MR:2002}).

The obtained results also have other consequences; let us give a few examples. Let
$$
\begin{array}{lcl}
    \bm{a4} & = & \Box p \to \Box\Box p;\\
    \bm{ad} & = & \Diamond \top.
\end{array}
$$
The logics $\logic{E4}$, $\logic{EM4}$, $\logic{EN4}$, and $\logic{EMN4}$ are contained in $\logic{wGrz}$ and are $\ccls{coNP}$-complete~\cite{Vardi:1989}, and therefore their variable-free fragments are also $\ccls{coNP}$-complete. Moreover, the logics $\logic{ED}$, $\logic{EMD}$, $\logic{END}$, $\logic{EMND}$, $\logic{E4D}$, $\logic{EM4D}$, $\logic{EN4D}$, and $\logic{EMN4D}$ are contained in $\logic{Grz.3}$ and are $\ccls{coNP}$-complete~\cite{Vardi:1989}, and thus their two-variable fragments are also $\ccls{coNP}$-complete. However, using the methods described in~\cite{ChR:2003:AiML}, it can be proved that their one-variable fragments are already $\ccls{coNP}$-hard: similarly to the case of $\logic{wGrz}$, one can use the fact that these logics are contained in $\logic{Grz}$ and then apply the one-variable formulas proposed in~\cite{ChR:2003:AiML} for this logic.


\bigskip

\textit{The first author is a winner of the ``Junior Leader'' competition held by the ``BASIS'' Foundation for the Development of Theoretical Physics and Mathematics.}

\end{document}